\documentclass[11pt]{article}

\usepackage[T1]{fontenc}
\usepackage{lmodern}
\usepackage{amsmath,amssymb,amsthm}
\usepackage{mathtools}
\usepackage{enumitem}
\usepackage{booktabs,longtable,array}
\usepackage{multicol}
\usepackage{pdflscape}
\usepackage{verbatim}
\usepackage{microtype}
\usepackage[hidelinks,hypertexnames=false]{hyperref}
\usepackage[margin=1in]{geometry}

\newtheorem{theorem}{Theorem}[section]
\newtheorem{lemma}[theorem]{Lemma}
\newtheorem{corollary}[theorem]{Corollary}
\newtheorem{proposition}[theorem]{Proposition}
\theoremstyle{remark}

\newtheorem{example}[theorem]{Example}

\newcommand{\Z}{\mathbb{Z}}
\newcommand{\Cfive}{C_5}
\newcommand{\ZenodoDOI}{\texttt{https://doi.org/10.5281/zenodo.21882725}}

\title{Completing the Boundary Case of the Mahmoodian--Mirzakhani Conjecture
and 117 New Computational 5-Cycle Decompositions of Complete Tripartite Graphs}
\author{Roozbeh Pournader}
\date{Preprint, August 17, 2026}

\begin{document}

\maketitle

\begin{abstract}
Let $K_{r,s,t}$, with $r\le s\le t$, denote the complete tripartite graph
whose partite sets have sizes $r,s,t$. Mahmoodian and Mirzakhani~\cite{MahmoodianMirzakhani1995} gave three
necessary conditions for $K_{r,s,t}$ to admit a decomposition into 5-cycles
and conjectured that these conditions are sufficient. One of the conditions is $t\le 4rs/(r+s)$.
We prove the conjecture for every odd triple on the extremal boundary $t = 4rs/(r+s)$.

The proof is constructive. After reducing an arbitrary odd boundary triple to
$(r,s,t)=(hga,hgb,hab)$, $a+b=4g$, we give an explicit cyclic decomposition of $K_{ga,gb,ab}$ and use the 
Mahmoodian and Mirzakhani scaling theorem to supply the common factor $h$. Together
with the previously known all-even result, this settles the conjecture for
every triple satisfying the boundary condition with equality.

We also report explicit computer-generated $C_5$-decompositions for 117 odd triples satisfying the necessary conditions, 116 of which are strict-interior cases.  To the best of our knowledge, all 117 cases were previously unresolved: no
decomposition for any of them had been reported, and none of the 117 triples
is covered by earlier existence results, constructions, or their
recursive consequences.  Moreover, these 117 certificates together with the boundary
construction settle every previously unresolved triple satisfying the
necessary conditions with fewer than $4400$ edges.
Each computation is supplied as a machine-readable cycle-list certificate and
can be checked independently by a short Python verifier.  We also give a complete
human-readable edge-label-matrix certificate for $K_{9,19,23}$.
\end{abstract}

\section{Introduction}

A $\Cfive$-decomposition of a graph $G$ is a partition of $E(G)$ into
cycles of length five. Mahmoodian and Mirzakhani~\cite{MahmoodianMirzakhani1995}
initiated the study of $\Cfive$-decompositions of complete tripartite graphs.
For positive integers $r\le s\le t$, they showed that if $K_{r,s,t}$ has
a $\Cfive$-decomposition, then
\begin{equation}
r,s,t \text{ have the same parity},
\tag{1}
\end{equation}
\begin{equation}
5\mid rs+rt+st,
\tag{2}
\end{equation}
and
\begin{equation}
t\le \frac{4rs}{r+s}.
\tag{3}
\end{equation}
They conjectured that these three conditions are also sufficient.

Mahmoodian and Mirzakhani already established two infinite families on the
boundary in~(3). Their Corollary~4.3 gives a $\Cfive$-decomposition of
$K_{2n,2n,4n}$ for every positive integer $n$, and their Corollary~4.4 gives
a $\Cfive$-decomposition of $K_{m,3m,3m}$ for every positive integer $m$
\cite[Corollaries~4.3 and~4.4]{MahmoodianMirzakhani1995}. These are exactly
the two possible equal-part forms on the boundary: if the two smaller parts
are equal, boundary equality gives $K_{r,r,2r}$, which is admissible only
when $r$ is even; if the two larger parts are equal, boundary equality gives
$K_{r,3r,3r}$. Thus the original paper settles the equal-part endpoints of
the boundary.

A number of further cases of the conjecture have subsequently been
established. Cavenagh and Billington~\cite{CavenaghBillington2000} treated
several additional families, and Cavenagh~\cite{Cavenagh2002} proved
sufficiency whenever $r,s,t$ are all even. Billington and
Cavenagh~\cite{BillingtonCavenagh2011} proved further results when odd
partite sets have similar sizes. Alipour, Mahmoodian and
Mollaahmadi~\cite{AlipourEtAl2012}, Abdolmaleki et
al.~\cite{AbdolmalekiEtAl2019}, and Kudarzi, Mahmoodian and
Naghdabadi~\cite{KudarziEtAl2021} obtained additional constructions for
portions of the remaining odd case.

In addition to the boundary construction, we give explicit computational
certificates for 117 triples.  In constructing this list, we compared every
candidate triple against the explicit examples, sufficient families, and
recursive constructions in the previous literature summarized above.  To the
best of our knowledge, no $C_5$-decomposition supported by a valid prior
construction had been reported for any of these cases, and none of these 117
existence statements is derivable from the previously available results and
constructions that we found.
Two of the triples,
$K_{15,21,25}$ and $K_{15,23,25}$, lie in the literal stated range of
a published mixing theorem, but the proof of that theorem requires
one-summand representations that do not exist for these instances; the
certificates supplied here settle them independently.
Taken together with the boundary construction
of Theorem~\ref{thm:main}, these certificates also close the entire previously
unresolved range below $4400$ edges: according to our reading of the cited
literature, every triple $r\le s\le t$ satisfying the necessary conditions
and $|E(K_{r,s,t})|<4400$ is either covered by an earlier result, by the
boundary theorem, or by one of the 117 certificates reported here.  The
computations are described in Section~\ref{sec:computation}.  Table~\ref{tab:computational-cases}
records all 117 triples, and Appendices~\ref{app:k9matrix} and~\ref{app:k9cycles} give a complete human-readable certificate for the first
example in the list, $K_{9,19,23}$. Machine-readable certificates and an
independent verifier have been deposited with the paper in the Zenodo record
\ZenodoDOI.

In this note we complete the extremal case of~(3),
\begin{equation}
t=\frac{4rs}{r+s},
\tag{4}
\end{equation}
for odd part sizes, including the case in which all three part sizes are
distinct.

Refining the edge-counting argument used by Mahmoodian and Mirzakhani in
the proof of their Theorem~2.1, let $x,y,z$ denote the numbers of cycles in
which the singleton vertex lies respectively in the parts of sizes $r,s,t$.
Every 5-cycle in a tripartite graph uses its vertices in a $(2,2,1)$
distribution among the three partite sets. Counting the edges between each
pair of parts gives
\[
3x+y+z=st,
\]
\[
x+3y+z=rt,
\]
\[
x+y+3z=rs.
\]
Consequently
\begin{equation}
x=\frac{4st-rs-rt}{10},\quad
y=\frac{4rt-rs-st}{10},\quad
z=\frac{4rs-rt-st}{10}.
\tag{5}
\end{equation}
Thus $z\ge0$ is exactly condition~(3), and equality in~(3) forces
\begin{equation}
z=0.
\tag{6}
\end{equation}

Our main result is the following.

\begin{theorem}\label{thm:main}
Let $r\le s\le t$ be positive odd integers satisfying
$t=4rs/(r+s)$. Then $K_{r,s,t}$ has a $\Cfive$-decomposition.
\end{theorem}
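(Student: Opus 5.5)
We follow the route announced in the abstract: a number-theoretic reduction to a normal form, an explicit cyclic construction for a primitive triple, and a scaling step.

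\textbf{Step 1: reduction to a normal form.} Put $d=\gcd(r,s)$, $r=da$, $s=db$ with $\gcd(a,b)=1$. Then $t=4dab/(a+b)$. Since $\gcd(ab,a+b)=1$, we get $(a+b)\mid 4d$. As $r,s$ are odd, $a,b$ are odd, so $a+b$ is even.

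Write $4d=(a+b)m$. Because $d$ is odd, $v_2(a+b)\le 2$. If $v_2(a+b)=1$, then $m$ is even and $t=abm$ is even, contradicting oddness of $t$. Hence $a+b\equiv 0\pmod 4$, so $a+b=4g$. Then $d=gm$ with $m=h$ odd, and $t=hab$.

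This gives $(r,s,t)=(hga,hgb,hab)$ with $a+b=4g$, $a\le b$, $\gcd(a,b)=1$. Also $\gcd(a,g)=\gcd(b,g)=1$, since any common divisor of $a$ and $g$ divides $b=4g-a$. For this form one checks that conditions (1)–(3) hold automatically. The boundary case is $t\ge s$, i.e.\ $ab\ge gb$, i.e.\ $a\ge g$, so $g\le a\le 2g$.

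\textbf{Step 2: scaling.} We assume the Mahmoodian–Mirzakhani scaling theorem: a $\Cfive$-decomposition of $K_{r,s,t}$ yields one of $K_{hr,hs,ht}$ for every positive integer $h$ (blow up each vertex into $h$ copies, and decompose each resulting $K_{h,h,h,h,h}$-type blow-up of a 5-cycle, i.e.\ $C_5[h]$, into 5-cycles). This reduces the theorem to the case $h=1$, namely $K_{ga,gb,ab}$ with $a+b=4g$.

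\textbf{Step 3: cyclic construction for $K_{ga,gb,ab}$.} By (5) and (6), every cycle has its singleton in the part of size $r$ or $s$, never in the part of size $t$. Counting gives $x=(4st-rs-rt)/10$ and $y$ similarly, with all counts integral.

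We use the product structure. Index the $t$-part as $\Z_a\times\Z_b$, the $r$-part as $\Z_g\times\Z_a$, and the $s$-part as $\Z_g\times\Z_b$. The plan is to find a small set of base 5-cycles, of types $R\,T\,S\,T\,S$ and $S\,T\,R\,T\,R$, and develop them under the group $\Z_a\times\Z_b$ acting on the $\Z_a$ and $\Z_b$ coordinates. Each orbit then has $ab$ cycles. Since $\gcd(a,b)=1$, this group is cyclic, $\Z_{ab}$, which gives the \emph{cyclic} decomposition of the abstract.

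Edges between parts fall into orbits indexed by differences, and there are $g^2$ of them in each pair: $R$–$S$ has $g^2$ orbits since $rs/ab=g^2$, and $R$–$T$ and $S$–$T$ have $ga\cdot ab/ab=ga$ and $gb$ orbits. Each base cycle of type $RTSTS$ uses $2$ $S$–$T$ orbits, $1$ $R$–$T$ orbit, $1$ $R$–$S$ orbit (plus one more $S$–$T$ edge?); the exact accounting is fixed by (5). The task is to choose base cycles whose edge-differences cover each orbit exactly once.

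\textbf{Main obstacle.} The main obstacle is writing down these base cycles uniformly in $a,b,g$. We would first try $a=b=2g$, which fails because it is non-coprime; so the true first test case is $g=1$, $a=b=2$, which is not allowed either. Instead we start from the $K_{m,3m,3m}$ pattern ($a=g$, $b=3g$) and a few small instances, e.g.\ $(g,a,b)=(3,5,7)$ giving $K_{15,21,35}$. We would find base cycles by computer, read off the difference patterns as linear functions of the indices $i\in\Z_g$, $j\in\Z_a$, and then verify in general that each difference class is hit exactly once, using the coprimality relations from Step~1.
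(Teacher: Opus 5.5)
Your Steps~1 and~2 are correct and match the paper: Step~1 is Lemma~\ref{lem:param}, and your divisibility argument is a harmless variant of it. Step~2 is the same appeal to the Mahmoodian--Mirzakhani scaling theorem.

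The gap is Step~3. It carries the entire content of the theorem, and you do not carry it out. The symmetry you propose is the right one: the paper's cycles are invariant under $\Z_a\times\Z_b$ acting on second coordinates, and in fact also under a $\Z_g$-shift. But you exhibit no base cycles and verify nothing. ``Find base cycles by computer for a few instances and read off linear patterns'' is a research plan, not a proof.

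Your bookkeeping is also unsettled. The orbit counts are $g^2$, $ga$, $gb$ for $RS$, $RT$, $ST$, not $g^2$ in each pair. A singleton-$R$ cycle $R\,T\,S\,T\,S$ uses exactly one $RS$-edge, one $RT$-edge and three $ST$-edges, and symmetrically for singleton $S$. Solving the resulting orbit equations forces $gv$ singleton-$R$ base orbits and $gu$ singleton-$S$ base orbits, where $u=(a-g)/2$ and $v=(b-g)/2$.

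That substitution is the missing idea. From $a+b=4g$ you get $u+v=g$, $3u+v=a$ and $u+3v=b$. These identities tell you how to split the difference groups $\Z_g$ (for $RS$), $\Z_a$ (for $RT$, difference $z-i$) and $\Z_b$ (for $ST$, difference $k-j$) into consecutive blocks. The paper's cycles, in your notation, are:
\begin{itemize}
\item for $0\le q<u$, the cycle $(S_{x+q,j},R_{x,i},T_{i+3q,j},R_{x,i-1},T_{i+3q+1,j+q})$, with $RS$-difference $q$, $RT$-differences $3q,3q+1,3q+2$, and $ST$-difference $q$;
\item for $0\le p<v$, the cycle $(R_{x,i},S_{x+u+p,j},T_{i,j+u+3p},S_{x+u+p,j-1},T_{i+3u+p,j+u+3p+1})$, with $RS$-difference $u+p$, $RT$-difference $3u+p$, and $ST$-differences $u+3p,u+3p+1,u+3p+2$.
\end{itemize}
Offsetting the two repeated-part vertices, and the two $T$-vertices, by $1$ in the relevant coordinate is what produces three consecutive differences. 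As $(x,i,j)$ runs over $\Z_g\times\Z_a\times\Z_b$, each edge position runs bijectively through its difference class. The three identities say exactly that these differences partition $\Z_g$, $\Z_a$ and $\Z_b$. Without this explicit uniform family, or some substitute with a general verification, your proposal does not prove the theorem.
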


The proof occupies Sections~\ref{sec:param} and~\ref{sec:construction}.
The three partite sets are represented as pairwise products of cyclic
groups, and the edge sets are partitioned according to coordinate
differences.

\section{Parameterization of odd boundary triples}
\label{sec:param}

We first put the parameters in a convenient form.

\begin{lemma}\label{lem:param}
Suppose $r\le s\le t$ are positive odd integers that
satisfy $t=4rs/(r+s)$. Then there exist positive odd integers $h$, $g$, $a$, and $b$ such that
\begin{equation}
r=hga,\quad
s=hgb,\quad
t=hab,
\tag{7}
\end{equation}
where
\begin{equation}
\gcd(a,b)=1,\quad
a+b=4g,\quad
g\le a\le b\le3a.
\tag{8}
\end{equation}
\end{lemma}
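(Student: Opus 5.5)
Let $d=\gcd(r,s)$ and write $r=da$, $s=db$ with $\gcd(a,b)=1$. Since $r$ and $s$ are odd, $d,a,b$ are odd and $a\le b$. The boundary condition becomes
\[
t(a+b)=4dab.
\]
Because $\gcd(a,b)=1$, both $\gcd(a,a+b)=\gcd(a,b)=1$ and $\gcd(b,a+b)=1$, so $\gcd(ab,a+b)=1$. Hence $a+b$ divides $4d$. Both $a$ and $b$ are odd, so $a+b$ is even, and we write $a+b=2m$. Then $2m\mid 4d$, i.e.\ $m\mid 2d$. Also $t=4dab/(a+b)=2dab/m$, and $t$ must be odd.

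The key step is the parity bookkeeping that forces $4\mid a+b$. Suppose $m$ were odd. Then $m\mid d$, so $t=2(d/m)ab$ is even, a contradiction. Therefore $m$ is even, and $a+b=4g$ for some positive integer $g$. The divisibility $a+b\mid 4d$ now reads $g\mid d$. Write $d=hg$, so that
\[
r=hga,\qquad s=hgb,\qquad t=4hgab/(4g)=hab.
\]
Since $d=hg$ is odd, both $h$ and $g$ are odd and positive. This gives~(7), together with $\gcd(a,b)=1$ and $a+b=4g$ in~(8).

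It remains to get the inequalities in~(8).
\begin{itemize}[leftmargin=*]
\item $a\le b$ follows from $r\le s$.
\item $s\le t$ means $hgb\le hab$, i.e.\ $g\le a$.
\item Substituting $g=(a+b)/4$ into $g\le a$ gives $a+b\le 4a$, i.e.\ $b\le 3a$.
\end{itemize}
Thus $g\le a\le b\le 3a$.

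The main obstacle is the parity step above: excluding $a+b\equiv 2\pmod 4$ requires combining the oddness of $t$ with the divisibility $a+b\mid 4d$. The remaining steps are direct substitution.
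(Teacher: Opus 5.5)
Your proof is correct and follows essentially the same route as the paper: divide out $d=\gcd(r,s)$, use the coprimality of $ab$ with $a+b$ (the paper phrases this as $\gcd(g,ab)=1$) to obtain $g\mid d$, and use the oddness of $t$ to force $4\mid a+b$. The only difference is the order of the steps. The paper reads off $v_2(a+b)=2$ directly from $t=4dab/(a+b)$ being odd and then invokes coprimality, whereas you first derive $a+b\mid 4d$, then rule out $a+b\equiv 2 \pmod 4$, and get the oddness of $g$ for free from $g\mid d$.
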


\begin{proof}
Let $d=\gcd(r,s)$, $r=da$, and $s=db$,
where $\gcd(a,b)=1$ and $a\le b$. Since $r$ and $s$ are odd, so are
$d$, $a$, and~$b$. The boundary equality gives
\begin{equation}
t=\frac{4dab}{a+b}.
\tag{9}
\end{equation}
The numerator in~(9) has a $2$-adic valuation of exactly $2$. Since $t$ is an odd
integer, the denominator $a+b$ must also have a 2-adic valuation of exactly $2$.
Hence
\begin{equation}
a+b=4g
\tag{10}
\end{equation}
for some odd positive integer $g$.

We claim that
$\gcd(g,ab)=1$.
Indeed, if an odd prime $p$ divided both $g$ and $a$, then from
$4g=a+b$ it would also divide $b$, contradicting $\gcd(a,b)=1$. The
argument for $b$ is identical.

Equation~(9) now becomes
\[
t=\frac{dab}{g}.
\]
Since $t$ is an integer and $\gcd(g,ab)=1$, we must have $g\mid d$.
Write $d=hg$.
Then 
$r=hga$, $s=hgb$, and $t=hab$,
as required. Since $d$ and $g$ are odd, $h$ is also odd.

Finally, $s\le t$ gives $g\le a$. Since $a\le b$ and $a+b=4g$, $g\le a\le b$
and
$b=4g-a\le4a-a=3a$.
This proves the lemma.
\end{proof}

The case $b=3a$ in Lemma~\ref{lem:param} is precisely the previously known
odd equal-part endpoint. Indeed, $a+b=4g$ then gives $g=a$, and
$\gcd(a,b)=1$ forces $a=1$, so the primitive triple is $(1,3,3)$; the
common factor $h$ gives the family $K_{h,3h,3h}$ of
Mahmoodian and Mirzakhani.

For the remainder of the construction define
\begin{equation}
u=\frac{a-g}{2},\qquad
v=\frac{b-g}{2}.
\tag{11}
\end{equation}
Since $a$, $b$, and $g$ are odd and $g\le a\le b$, $u$ and $v$ are nonnegative
integers. The relation $a+b=4g$ yields the identities
\begin{equation}
u+v=g,
\tag{12}
\end{equation}
\begin{equation}
3u+v=a,
\tag{13}
\end{equation}
and
\begin{equation}
u+3v=b.
\tag{14}
\end{equation}
These identities are precisely what is needed for the construction.

\section{The cyclic construction}
\label{sec:construction}

We first deal with the case $h=1$, that is, $K_{ga,gb,ab}$. Let its three vertex classes be
$A=\Z_g\times\Z_a$,
$B=\Z_g\times\Z_b$,
and
$C=\Z_a\times\Z_b$. Write the corresponding vertices as
$A_{x,i}$, $B_{y,j}$, and~$C_{z,k}$. (All subscripts below are interpreted in their respective cyclic groups.)

Use the standard integer representatives for the cyclic groups and set
\begin{equation}
U=\{0,\ldots,u-1\},\qquad
V=\{u,\ldots,g-1\}.
\tag{15}
\end{equation}
Thus $|U|=u$ and $|V|=v$.  An interval is understood to be empty when its
upper endpoint is smaller than its lower endpoint.  The identities~(13) and~(14)
give the consecutive decompositions of difference classes
\begin{equation}
\Z_a=
\{0,\ldots,3u-1\} \cup
\{3u,\ldots,3u+v-1\},
\tag{16}
\end{equation}
and
\begin{equation}
\Z_b=
\{0,\ldots,u-1\} \cup
\{u,\ldots,u+3v-1\}.
\tag{17}
\end{equation}
We use these blocks directly in the cycle formulas.
For the second block of
$\Z_g$, it is convenient to write its elements as $u+p$, where
$0\le p<v$.

\subsection*{Family I}
For each
$0\le q<u$,
$x\in\Z_g$,
$i\in\Z_a$, and
$j\in\Z_b$, take
\begin{equation}
(
B_{x+q,j},
A_{x,i},
C_{i+3q,\,j},
A_{x,i-1},
C_{i+3q+1,\,j+q}
).
\tag{18}
\end{equation}
If this family is nonempty, then $a\ge3$.  Hence the two $A$-vertices are
distinct, and the two $C$-vertices are distinct because their first coordinates
differ by $1$.  Thus~(18) is a simple 5-cycle.  It has one $B$-vertex, two
$A$-vertices, and two $C$-vertices.

\subsection*{Family II}
For each
$0\le p<v$,
$x\in\Z_g$,
$i\in\Z_a$, and
$j\in\Z_b$, take
\begin{equation}
(
A_{x,i},
B_{x+u+p,j},
C_{i,\,j+u+3p},
B_{x+u+p,\,j-1},
C_{i+3u+p,\,j+u+3p+1}
).
\tag{19}
\end{equation}
If this family is nonempty, then $b\ge3$.  Hence the two $B$-vertices are
distinct, and the two $C$-vertices are distinct because their second coordinates
differ by $1$.  Thus~(19) is also a simple 5-cycle.  It has one $A$-vertex,
two $B$-vertices, and two $C$-vertices.

We next verify that these cycles form a decomposition.

\begin{lemma}\label{lem:cyclic}
The cycles in~(18) and~(19) form a $\Cfive$-decomposition of
$K_{ga,gb,ab}$.
\end{lemma}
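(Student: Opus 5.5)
Since every cycle in (18) and (19) is already a simple 5-cycle, I only need to show that every edge of $K_{ga,gb,ab}$ lies in exactly one of them. I will first compare counts. Family I contains $u\cdot gab$ cycles and Family II contains $v\cdot gab$, so there are $g^2ab$ cycles altogether, i.e.\ $5g^2ab$ edges. The graph has
\[
g^2ab + ga\cdot ab + gb\cdot ab = gab(g+a+b) = gab\cdot 5g
\]
edges, using $a+b=4g$. So it will be enough to prove that every edge is covered at least once, or equivalently at most once. I will argue by \emph{difference classes}, separately for each of the three edge types $AB$, $AC$, $BC$.

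For $AB$ edges, write an edge $A_{x,i}B_{y,j}$ by the difference $y-x\in\Z_g$ together with the free coordinates $(x,i,j)$. In Family I, the edge $B_{x+q,j}A_{x,i}$ has difference $q\in U$. In Family II, the edge $A_{x,i}B_{x+u+p,j}$ has difference $u+p\in V$. As $(x,i,j)$ ranges over $\Z_g\times\Z_a\times\Z_b$, each difference $d\in\Z_g=U\cup V$ occurs exactly once for every triple $(x,i,j)$. Hence every $AB$ edge is covered exactly once.

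For $AC$ edges, an edge $A_{x,i}C_{z,k}$ is parametrized by $x$, $k$ and the difference $z-i\in\Z_a$. In Family I, the two edges at $A_{x,i}$ contribute the differences $3q$ and $3q+1$. The edge $C_{i+3q+1,j+q}A_{x,i-1}$ contributes the difference $3q+2$, with second $C$-coordinate $j+q$; since $j$ is free, that coordinate still ranges over all of $\Z_b$. So Family I covers the differences $0,\dots,3u-1$ exactly once, for each $x$ and each $C$-second-coordinate. In Family II, the two $AC$ edges have differences $0$ and $3u+p$. Differences $3u+p$ with $p<v$ fill the block $\{3u,\dots,3u+v-1\}$ from (16). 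Difference $0$ then occurs $v$ times, once for each $p$, and it would also be counted again in Family I if $u>0$. This double count is the main obstacle, and I will re-examine the actual edge set. In (19) the cycle is closed by the edge from $C_{i+3u+p,\dots}$ back to $A_{x,i}$, and $A_{x,i}$ is also adjacent to $C_{i,j+u+3p}$. So the difference-0 edge does appear in Family II. I therefore expect the intended accounting to be that Family II's $AC$ edges have differences $3u+p$ and $0$ while the $BC$ edges carry the remaining data, or else that I have misread (19). My plan is to tabulate all five edges of each cycle carefully. I will assign each edge to a difference class in the relevant $\Z_a$ or $\Z_b$ and keep the free parameters explicit. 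Then I will check that the pair (edge type, difference) together with the free parameters determines $(p \text{ or } q, x, i, j)$ uniquely.

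For $BC$ edges the same bookkeeping applies, with $\Z_b$ split as in (17). Family II contributes the second-coordinate differences $u+3p$, $u+3p+1$ and $u+3p+2$ (the last from $B_{x+u+p,j-1}$), which together fill $\{u,\dots,u+3v-1\}$. Family I contributes the difference $q$ via the edge $B_{x+q,j}C_{i+3q+1,j+q}$, which fills $\{0,\dots,u-1\}$. I will then match the $BC$ and $AC$ counts with the edge total above. Once the tabulation shows each class is hit exactly once per parameter choice, the counting identity finishes the proof. The delicate step is precisely the $AC$ accounting in Family II, so I will verify it first, on the small case $(g,a,b)=(3,5,7)$, before writing the general argument.
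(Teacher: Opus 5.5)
Your framework is the paper's: sort edges by type and by coordinate difference, then check that the differences used by Families I and II partition $\Z_g$, $\Z_a$, $\Z_b$ via (12)--(14). Your $AB$ and $BC$ bookkeeping is correct. The gap is the $AC$ step, which rests on a misreading of (19) and is then left unresolved.

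In the Family II cycle $(A_{x,i},B_{x+u+p,j},C_{i,j+u+3p},B_{x+u+p,j-1},C_{i+3u+p,j+u+3p+1})$, the vertex $C_{i,j+u+3p}$ lies between the two $B$-vertices. $A_{x,i}$ is adjacent only to $B_{x+u+p,j}$ and $C_{i+3u+p,j+u+3p+1}$, so Family II has no $AC$ edge of difference $0$. More generally, a 5-cycle with singleton $A$ must have the pattern $A$--$B$--$C$--$B$--$C$. It therefore has exactly one $AB$ edge, one $AC$ edge and three $BC$ edges. Your own tally gives one $AB$, two $AC$, and the three $BC$ differences $u+3p,u+3p+1,u+3p+2$ that you list correctly. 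That is six edges for a 5-cycle, which signals the misreading. As written, you assert a double count that would make the lemma false, then defer resolving it to a later tabulation and a test case. So the $AC$ class, and hence the lemma, is not proved.

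Once (19) is read correctly the argument closes at once. Family II contributes only the difference $3u+p$, via the closing edge $C_{i+3u+p,j+u+3p+1}A_{x,i}$, filling the block $\{3u,\dots,3u+v-1\}$. Family I contributes $3q,3q+1,3q+2$, filling the block $\{0,\dots,3u-1\}$. The differences $3q$ and $3q+1$ come from the two edges at $C_{i+3q,j}$, not at $A_{x,i}$ as you wrote. By (13) the two blocks together give $\Z_a$. For the bijection, an $AC$ edge of fixed difference is determined by $(x,i,k)$, not by $(x,k)$ alone. Each edge position is a translate of $(x,i,j)$; for example, the difference-$(3q+2)$ edge $A_{x,i-1}C_{i+3q+1,j+q}$ corresponds to $(x,i-1,j+q)$. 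Hence every edge of every difference class is hit exactly once. Your count $5g^2ab=|E(K_{ga,gb,ab})|$ then becomes an optional consistency check, which is exactly how the paper uses it.
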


\begin{proof}
For edges joining two partite sets, define the cyclic differences
\[
\Delta_{AB}(A_{x,i},B_{y,j})=y-x\in\Z_g,
\]
\[
\Delta_{AC}(A_{x,i},C_{z,k})=z-i\in\Z_a,
\]
\[
\Delta_{BC}(B_{y,j},C_{z,k})=k-j\in\Z_b.
\]
Reading the edge differences from~(18) and~(19) gives
\[
\begin{array}{c|c|c|c}
 & \Delta_{AB} & \Delta_{AC} & \Delta_{BC} \\
\hline
\text{Family I }(q)
 & q
 & 3q,\;3q+1,\;3q+2
 & q \\
\text{Family II }(p)
 & u+p
 & 3u+p
 & u+3p,\;u+3p+1,\;u+3p+2.
\end{array}
\tag{20}
\]
For a fixed $q$ or $p$ and a fixed edge position in one of the two families,
the free coordinates of that edge are obtained from $(x,i,j)$ only by adding
fixed constants in the relevant cyclic groups.  Consequently, as
$(x,i,j)$ ranges over $\Z_g\times\Z_a\times\Z_b$, that edge position runs
bijectively through all edges having the indicated difference.

It therefore remains only to check that the displayed differences partition
the three difference groups.  For $AB$-edges they are
\[
\{0,\ldots,u-1\} \cup
\{u,\ldots,u+v-1\}
=\{0,\ldots,g-1\}=\Z_g
\]
by~(12).  For $AC$-edges, Family I supplies the consecutive block
$\{0,\ldots,3u-1\}$ and Family II supplies
$\{3u,\ldots,3u+v-1\}$; by~(13), their disjoint union is $\Z_a$.
For $BC$-edges, Family I supplies $\{0,\ldots,u-1\}$ and Family II supplies
$\{u,\ldots,u+3v-1\}$; by~(14), their disjoint union is $\Z_b$.
Thus every edge of each of the three bipartite edge sets occurs exactly once,
which proves the lemma.
\end{proof}

As an independent numerical check, the construction contains
\[
u gab+v gab=gab(u+v)=g^2ab
\]
cycles. On the boundary,
\[
\begin{aligned}
|E(K_{ga,gb,ab})|
&=g^2ab+ga^2b+gab^2\\
&=gab(g+a+b)\\
&=5g^2ab,
\end{aligned}
\]
so the required number of 5-cycles is indeed $g^2ab$. The type counts are
$gabu$ cycles with singleton $B$, $gabv$ cycles with singleton $A$, and no
cycles with singleton $C$, in agreement with~(5) and~(6).

We now restore the common factor $h$ using a result from the original paper.
Mahmoodian and Mirzakhani proved that if $K_{r,s,t}$ admits a
$\Cfive$-decomposition, then so does $K_{ar,as,at}$ for every positive
integer $a$ \cite[Theorem~4.1]{MahmoodianMirzakhani1995}.

\begin{proof}[Proof of Theorem~\ref{thm:main}]
By Lemma~\ref{lem:param},
$(r,s,t)=(hga,hgb,hab)$
for odd $a$, $b$, and $g$ satisfying the hypotheses of Section~\ref{sec:construction}.
Lemma~\ref{lem:cyclic} supplies a $\Cfive$-decomposition of
$K_{ga,gb,ab}$. Applying the scaling theorem of Mahmoodian and Mirzakhani
\cite[Theorem~4.1]{MahmoodianMirzakhani1995} with multiplier $h$ gives a
$\Cfive$-decomposition of
$K_{hga,hgb,hab}=K_{r,s,t}$.
\end{proof}

\section{Consequences and an example}

The following is immediate.

\begin{corollary}\label{cor:odd-boundary}
The Mahmoodian--Mirzakhani necessary conditions are sufficient for every
odd triple $r\le s\le t$ satisfying
$t=4rs/(r+s)$.

\end{corollary}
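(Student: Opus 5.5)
The plan is to reduce Corollary~\ref{cor:odd-boundary} directly to Theorem~\ref{thm:main}. Sufficiency of the necessary conditions means: whenever an odd triple $r\le s\le t$ satisfies conditions~(1)--(3), $K_{r,s,t}$ has a $\Cfive$-decomposition. So I fix an odd triple with $t=4rs/(r+s)$ that satisfies~(1)--(3), and show that Theorem~\ref{thm:main} applies to it.

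The hypotheses of Theorem~\ref{thm:main} are that $r\le s\le t$ are positive odd integers with $t=4rs/(r+s)$, and these are exactly the assumptions in force. Theorem~\ref{thm:main} then gives the $\Cfive$-decomposition, and that is all the corollary asks for. Conditions~(1) and~(3) hold automatically in this situation, so no case analysis is needed.

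It is still worth checking, for consistency, that condition~(2) holds automatically on the odd boundary, so that no admissible triple is accidentally excluded. By Lemma~\ref{lem:param}, $(r,s,t)=(hga,hgb,hab)$ with $a+b=4g$. Then
\[
rs+rt+st=h^2gab(g+a+b)=5h^2g^2ab,
\]
which is divisible by $5$. So the boundary triples and the admissible boundary triples coincide, and the corollary is really a restatement of Theorem~\ref{thm:main}.

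There is no substantive obstacle. The only point requiring care is to read ``sufficient'' correctly, namely that the theorem's hypotheses are implied by the corollary's, and this is immediate.
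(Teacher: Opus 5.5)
Your proposal is correct and matches the paper, which treats the corollary as immediate from Theorem~\ref{thm:main} because the corollary's hypotheses are exactly the theorem's. Your check of condition~(2) via Lemma~\ref{lem:param} is valid; the paper gets the same fact more directly from $rs+rt+st=rs+t(r+s)=5rs$, without using the parameterization.
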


Cavenagh~\cite{Cavenagh2002} proved the conjecture for all triples
$r,s,t$ that are even. Combining that result with
Corollary~\ref{cor:odd-boundary} gives the full boundary statement.

\begin{corollary}\label{cor:full-boundary}
Let $r\le s\le t$ have the same parity and satisfy
$t=4rs/(r+s)$. Then $K_{r,s,t}$ admits a $\Cfive$-decomposition.
\end{corollary}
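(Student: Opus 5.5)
The plan is to split Corollary~\ref{cor:full-boundary} by the common parity of $r,s,t$ and to quote an existing result in each case; no new construction should be needed.

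\emph{Odd case.} If $r,s,t$ are all odd, then the hypotheses are exactly those of Theorem~\ref{thm:main}, equivalently Corollary~\ref{cor:odd-boundary}, and the decomposition follows at once.

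\emph{Even case.} If $r,s,t$ are all even, I will invoke Cavenagh's theorem~\cite{Cavenagh2002}, which says the three Mahmoodian--Mirzakhani conditions are sufficient in this case. I then check that the boundary hypothesis yields all three conditions.
\begin{enumerate}[label=(\roman*)]
\item Condition~(1) holds by assumption.
\item Condition~(3) holds with equality, since $t=4rs/(r+s)$.
\item For condition~(2), I will not use the parameterization of Lemma~\ref{lem:param}, which was stated only for odd triples. Instead I will use the boundary identity $(r+s)t=4rs$. This gives
\[
rs+rt+st=rs+(r+s)t=rs+4rs=5rs,
\]
which is divisible by~$5$.
\end{enumerate}
So every even boundary triple is admissible, and Cavenagh's result gives a $\Cfive$-decomposition.

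I do not expect a real obstacle. The only point that needs care is condition~(2) in the even case. It is not part of the hypotheses of Corollary~\ref{cor:full-boundary}, so it must be derived, and the identity above does this. The same computation also confirms that odd boundary triples satisfy~(2) automatically, which is consistent with Theorem~\ref{thm:main} assuming only the boundary equality.
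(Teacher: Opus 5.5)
Your proposal is correct and follows the paper's proof: split by parity, cite Cavenagh~\cite{Cavenagh2002} for the even case and Theorem~\ref{thm:main} for the odd case. It also notes, as the paper does, that on the boundary $rs+rt+st=rs+t(r+s)=5rs$, so the divisibility condition needed for Cavenagh's theorem holds automatically.
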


Indeed, if the three parameters are even, this follows from
Cavenagh~\cite{Cavenagh2002}, while if they are odd it follows from
Theorem~\ref{thm:main}. Notice also that on the boundary
$rs+rt+st=rs+t(r+s)=5rs$,
so the divisibility condition is automatic.

\begin{example}[The graph $K_{35,65,91}$]
The triple
$(35,65,91)$
lies on the boundary since
\[
91=\frac{4\cdot35\cdot65}{35+65}.
\]
Here $a=7$, $b=13$, $g=5$,
so $u=(7-5)/2=1$ and
$v=(13-5)/2=4$. Family I has only $q=0$, and therefore consists of the cycles
\[
\bigl(B_{x,j},A_{x,i},C_{i,j},A_{x,i-1},C_{i+1,j}\bigr),
\]
where $x\in\Z_5$, $i\in\Z_7$, and $j\in\Z_{13}$.  Family II has
$p=0,1,2,3$ and consists of
\[
\bigl(
A_{x,i},
B_{x+1+p,j},
C_{i,j+1+3p},
B_{x+1+p,j-1},
C_{i+3+p,j+2+3p}
\bigr).
\]
Thus~(18) gives $455$ cycles with singleton $B$, while~(19) gives $1820$
cycles with singleton $A$.  Hence $K_{35,65,91}$ is decomposed into
$455+1820=2275$
5-cycles, as required from
$|E(K_{35,65,91})|=11375$.
The example illustrates that no search or auxiliary partition choices are
intrinsic to the boundary construction: once the standard cyclic labeling is
fixed, the cycles are given directly by~(18) and~(19).
\end{example}

\section{Computational decompositions}
\label{sec:computation}

The construction in Sections~\ref{sec:param}--\ref{sec:construction} settles
the complete boundary, but it does not apply in the strict interior
$t<4rs/(r+s)$,
where all three singleton cycle types may occur.  To make progress in that
region, and also to obtain explicit witnesses for selected boundary instances,
we carried out a direct computational search for decompositions of unresolved
odd triples satisfying the necessary conditions.  This section describes the
search representation, the certificate format, and the independent
verification procedure.  The resulting 117 certificates are summarized in
Table~\ref{tab:computational-cases}.

\subsection{Exact-cover formulation}
\label{subsec:exact-cover}

For a fixed triple $(r,s,t)$, label the partite sets
$A=\{A_0,\ldots,A_{r-1}\}$,
$B=\{B_0,\ldots,B_{s-1}\}$, and
$C=\{C_0,\ldots,C_{t-1}\}$. The computational problem is an exact-cover instance.  The columns are the
edges of $K_{r,s,t}$, and every simple 5-cycle is a candidate row containing
its five edges.  A set of rows is an exact cover precisely when the
corresponding cycles form a $C_5$-decomposition.

The implementation generated candidate cycles by singleton type.  Rather than
storing every cycle as a five-vertex object, rows were assigned compact
integer identifiers and decoded into their five vertices and five edge
columns when needed.  Incidence lists for edges were stored in packed form;
for the larger instances, row identifiers were local to the singleton type,
which reduced memory usage.  The search used a fail-first rule: at each step
it selected an uncovered edge having the fewest currently compatible
candidate cycles and branched over those cycles.  Candidate branches were
ordered by the current sizes of the five incident edge lists, with a
reproducible pseudorandom tie break.  A canonical cycle could be fixed at the
start to remove vertex-label symmetry.

The forced cycle-type counts from~(5) were also used.  Writing
$x_A,x_B,x_C$ for the numbers of cycles whose singleton vertex lies in
$A,B,C$, respectively,
\[
 x_A=\frac{4st-rs-rt}{10},\quad
 x_B=\frac{4rt-rs-st}{10},\quad
 x_C=\frac{4rs-rt-st}{10}.
\]
For instances close to the boundary, $x_C$ can be very small.  In those cases
an unrestricted exact-cover search may spend substantial effort on branches
that use the wrong global number of $C$-singleton cycles.  For several such
instances we therefore fixed the required $C$-singleton cycles first and
solved the residual exact-cover instance using only $A$- and $B$-singleton
rows.  A reversible-undo implementation allowed backtracking without
reconstructing the entire candidate-row universe.  The output of every
successful search was a plain-text cycle list, one 5-cycle per line.

The computation is used here only to produce witnesses.  The correctness of a
reported decomposition does not depend on trusting the search heuristic or
its implementation: each output file is independently checkable as described
next.

\subsection{Independent verification}
\label{subsec:verification}

The archive contains a short Python program, \texttt{verify\_c5\_tripartite.py}.
It is invoked as
\begin{center}
\texttt{python3 verify\_c5\_tripartite.py k$r$\_$s$\_$t$\_decomposition.txt}.
\end{center}
Each nonempty line of the certificate must contain five vertex labels.  The
verifier checks that the five vertices are distinct, that every label belongs
to the stated partite set, and that consecutive vertices lie in different
parts.  It then forms the five undirected edges of every listed cycle.  If
$E=rs+rt+st$,
the verifier requires exactly $E/5$ cycles and requires the set of edges
appearing in the file to have cardinality of exactly $E$.  Since the file then
contains exactly $E$ edge occurrences in total, cardinality $E$ of their
union simultaneously proves that no edge is repeated and that every edge of
$K_{r,s,t}$ occurs.  Thus acceptance by this verifier is a direct check that
the file is a $C_5$-decomposition certificate.

The verifier is intentionally independent of the search data structures: it
reads only the final text certificate.  Appendix~\ref{app:verifier} reproduces
the verifier used for the certificates in Table~\ref{tab:computational-cases}.

\subsection{The 117 verified cases}
\label{subsec:117cases}

For each triple in Table~\ref{tab:computational-cases}, a complete cycle-list
certificate was produced and accepted by the independent verifier.  Before a
triple was included in the table, we checked it against the explicit
examples, infinite families, sufficient conditions, and recursive
constructions in the previous literature cited in the Introduction.  To the
best of our knowledge, no entry had a previously reported decomposition or a
valid covering construction. (As noted in the Introduction, the literal
statement of one published mixing theorem includes some of these triples, but
its proof does not supply the required summand representations for them.  The
certificates here do not rely on that assertion.  Specifically, for
$K_{15,21,25}$ and $K_{15,23,25}$ the stated inequalities force the
one-summand case, whereas the proof requires the non-$5$-multiple part to
belong to $\{11,13,15,17,19,25\}$; neither $21$ nor $23$ belongs to this set.
This is necessarily a claim about the available literature rather than a
formal non-derivability theorem. It should also be noted that while 26 of our certificates satisfy $t \le (5/3)r$ and may appear covered by~\cite{AlipourEtAl2012} at first glance, that publication requires a lower bound of $75 \le r$, which is larger than all our certificates.)

The same comparison gives a useful finite-range consequence.  To the best of
our knowledge, after combining the previously published results with
Theorem~\ref{thm:main} and Proposition~\ref{prop:computational}, there is no
remaining unresolved triple $r\le s\le t$ satisfying the necessary
conditions and
\[
|E(K_{r,s,t})|=rs+rt+st<4400.
\]
Thus the boundary construction and the 117 computational certificates cover
all cases below this edge threshold that were unresolved in the prior
literature, as far as we have been able to determine.

The table also gives the forced singleton-type counts from Equation~(5).  One
listed instance lies on the boundary and is therefore also covered by
Theorem~\ref{thm:main}; its computer-generated decomposition provides an
independent explicit check.

\begin{proposition}[Computational certificates]\label{prop:computational}
For every triple $(r,s,t)$ in Table~\ref{tab:computational-cases},
$K_{r,s,t}$ admits a $C_5$-decomposition.
\end{proposition}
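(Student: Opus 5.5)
The proof of Proposition~\ref{prop:computational} is a certificate check rather than a structural argument. For each of the 117 triples $(r,s,t)$ in Table~\ref{tab:computational-cases}, the deposited archive contains a cycle list \texttt{k$r$\_$s$\_$t$\_decomposition.txt}. The plan is to show that acceptance of this file by \texttt{verify\_c5\_tripartite.py} is logically equivalent to the file describing a $\Cfive$-decomposition of $K_{r,s,t}$. Existence then follows by running the verifier on all 117 files.

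First, I would make the soundness argument of Subsection~\ref{subsec:verification} precise. Fix a triple and let $E=rs+rt+st$. The verifier checks the following for each line:
\begin{itemize}[nosep]
\item it lists five distinct vertex labels;
\item each label lies in $A$, $B$ or $C$ with index in range;
\item consecutive vertices, including the last and the first, lie in different parts.
\end{itemize}
Hence each line is a simple closed walk of length five in $K_{r,s,t}$, that is, a $5$-cycle of the graph. Its five edges are distinct, because a $5$-cycle on five distinct vertices cannot repeat an edge. The verifier also checks that there are exactly $E/5$ lines, so the multiset of edge occurrences has size exactly $E$. It further checks that the underlying set of edges has cardinality $E=|E(K_{r,s,t})|$. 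A multiset of size $E$ whose support has size $E$ has no repeated element. A subset of $E(K_{r,s,t})$ of size $E$ is all of $E(K_{r,s,t})$. So every edge lies in exactly one listed cycle, which is the definition of a $\Cfive$-decomposition.

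Second, as an internal consistency check, I would note that each certificate has type counts $x_A,x_B,x_C$ equal to the values forced by~(5), as recorded in the table. This check is not needed for correctness. For the single boundary entry, Theorem~\ref{thm:main} gives an independent proof of existence.

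The main obstacle is not mathematical. It is ensuring that the reported computation is what actually certifies the statement. This requires three things:
\begin{itemize}[nosep]
\item the verifier reproduced in Appendix~\ref{app:verifier} implements exactly the checks above;
\item the archived files correspond to the table entries;
\item the parsing of labels cannot silently accept out-of-range or mis-parted vertices.
\end{itemize}
I would address this by auditing the short verifier line by line against the argument above. I would then state explicitly that the proof of Proposition~\ref{prop:computational} consists of that audit plus the successful verifier runs on all 117 certificates. For readers who want a hand-checkable instance, the edge-label matrix for $K_{9,19,23}$ in Appendix~\ref{app:k9matrix} serves as an illustration.
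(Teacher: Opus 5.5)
Your route is the paper's own. Its proof is just an appeal to the archived certificates and the verifier, and your counting argument is the one sketched in Section~\ref{subsec:verification}. Checked against the code in Appendix~\ref{app:verifier}, however, one step does not follow.

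The verifier works with label \emph{strings}. Both \texttt{len(set(c)) == 5} and \texttt{len(edges) == E} compare strings. The range check, by contrast, parses the index with \texttt{int(v[1:])}, which also accepts non-canonical spellings such as \texttt{A02} or \texttt{A+2} for $A_2$. So the map from labels to vertices is not injective. As a result, neither ``five distinct labels, hence a $5$-cycle'' nor ``$E$ distinct label pairs, hence a subset of $E(K_{r,s,t})$ of size $E$'' is justified.

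Your third bullet worries about out-of-range and mis-parted labels, which the code does catch. Aliasing is what it misses, and the failure is not hypothetical. Take a genuine certificate, a line through $A_3$, and an index $k\neq 3$ with $A_k$ not on that line. Rewrite that $A_3$ as $A_k$ with a leading zero. Every assertion still passes: the line still has five distinct strings with alternating parts, and the two string edges through $A_3$ are replaced by two new strings, so the edge set still has $E$ elements. Yet the two edges from $A_k$ to its neighbours on that line are now covered twice, and the corresponding edges at $A_3$ are not covered at all. So your claimed equivalence between acceptance and decomposition is false for the code as printed.

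The repair is small. Either key each vertex as \texttt{(v[0], int(v[1:]))} before the distinctness and edge-set checks, or reject any label with \texttt{v[1:] != str(int(v[1:]))}. Alternatively, check separately that every label in the archived files is canonical, as in the listing of Appendix~\ref{app:k9cycles}. Once labels determine vertices injectively, your argument goes through verbatim.

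Section~\ref{subsec:verification} of the paper makes the same identification of labels with vertices, so the fix applies there too. It affects the stated soundness of the checker, not the proposition itself.

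One more item for your audit: every check is an \texttt{assert}. So ``acceptance'' must mean a normal run without \texttt{python3 -O}, because that flag strips assertions and lets the script print \texttt{VERIFIED} without performing any check.
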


\begin{proof}
For each row there is a machine-readable cycle-list certificate in the Zenodo
archive.  Running the verifier of Section~\ref{subsec:verification} on that
file checks that its cycles partition the entire edge set of the stated
complete tripartite graph.
\end{proof}

\begingroup
\small
\setlength{\tabcolsep}{4.2pt}
\renewcommand{\arraystretch}{1.08}
\begin{longtable}{@{}l r l @{\hspace{1.25em}} l r l@{}}
\caption{Complete tripartite graphs for which an explicit computational $C_5$-decomposition certificate was obtained in this work. For each graph, $|E|=rs+rt+st$, and $(x_A,x_B,x_C)$ gives the forced numbers of cycles whose singleton part is $A$, $B$, and $C$, respectively. The number of cycles is $|E|/5=x_A+x_B+x_C$.}
\label{tab:computational-cases}\\
\toprule
Graph & $|E|$ & $(x_A,x_B,x_C)$ &
Graph & $|E|$ & $(x_A,x_B,x_C)$\\
\midrule
\endfirsthead
\multicolumn{6}{c}{\tablename~\thetable\ (continued)}\\
\toprule
Graph & $|E|$ & $(x_A,x_B,x_C)$ &
Graph & $|E|$ & $(x_A,x_B,x_C)$\\
\midrule
\endhead
\midrule
\multicolumn{6}{r}{Continued on next page}\\
\endfoot
\bottomrule
\endlastfoot
$K_{9,19,23}$ & 815 & $(137,22,4)$ &
$K_{11,17,21}$ & 775 & $(101,38,16)$ \\
$K_{11,21,27}$ & 1{,}095 & $(174,39,6)$ & $K_{11,27,31}$ & 1{,}475 & $(271,23,1)$ \\
$K_{13,19,29}$ & 1{,}175 & $(158,71,6)$ & $K_{13,21,23}$ & 1{,}055 & $(136,44,31)$ \\
$K_{13,23,31}$ & 1{,}415 & $(215,60,8)$ & $K_{13,31,33}$ & 1{,}855 & $(326,29,16)$ \\
$K_{15,21,25}$ & 1{,}215 & $(141,66,36)$ & 
$K_{15,21,35}$ & 1{,}575 & $(210,105,0)$ \\
$K_{15,23,25}$ & 1{,}295 & $(158,58,43)$ & 
$K_{15,23,35}$ & 1{,}675 & $(235,95,5)$ \\
$K_{15,25,27}$ & 1{,}455 & $(192,57,42)$ & $K_{15,25,29}$ & 1{,}535 & $(209,64,34)$ \\
$K_{15,25,31}$ & 1{,}615 & $(226,71,26)$ & $K_{15,25,33}$ & 1{,}695 & $(243,78,18)$ \\
$K_{15,25,35}$ & 1{,}775 & $(260,85,10)$ & $K_{15,25,37}$ & 1{,}855 & $(277,92,2)$ \\
$K_{15,27,35}$ & 1{,}875 & $(285,75,15)$ & $K_{15,29,35}$ & 1{,}975 & $(310,65,20)$ \\
$K_{15,31,35}$ & 2{,}075 & $(335,55,25)$ & $K_{15,33,35}$ & 2{,}175 & $(360,45,30)$ \\
$K_{15,35,37}$ & 2{,}375 & $(410,40,25)$ & $K_{15,35,39}$ & 2{,}475 & $(435,45,15)$ \\
$K_{15,35,41}$ & 2{,}575 & $(460,50,5)$ & $K_{17,19,27}$ & 1{,}295 & $(127,100,32)$ \\
$K_{17,21,31}$ & 1{,}535 & $(172,110,25)$ & $K_{17,25,35}$ & 1{,}895 & $(248,108,23)$ \\
$K_{17,27,29}$ & 1{,}735 & $(218,73,56)$ & $K_{17,27,39}$ & 2{,}175 & $(309,114,12)$ \\
$K_{17,29,37}$ & 2{,}195 & $(317,95,27)$ & $K_{17,31,41}$ & 2{,}495 & $(386,99,14)$ \\
$K_{17,35,45}$ & 2{,}935 & $(494,89,4)$ & $K_{17,37,39}$ & 2{,}735 & $(448,58,41)$ \\
$K_{17,39,47}$ & 3{,}295 & $(587,70,2)$ & $K_{17,47,49}$ & 3{,}935 & $(758,23,6)$ \\
$K_{19,23,29}$ & 1{,}655 & $(168,110,53)$ & $K_{19,23,39}$ & 2{,}075 & $(241,163,11)$ \\
$K_{19,25,35}$ & 2{,}015 & $(236,131,36)$ & $K_{19,27,37}$ & 2{,}215 & $(278,130,35)$ \\
$K_{19,29,33}$ & 2{,}135 & $(265,100,62)$ & $K_{19,29,43}$ & 2{,}615 & $(362,147,14)$ \\
$K_{19,33,39}$ & 2{,}655 & $(378,105,48)$ & $K_{19,35,45}$ & 3{,}095 & $(478,118,23)$ \\
$K_{19,37,47}$ & 3{,}335 & $(536,113,18)$ & $K_{19,39,43}$ & 3{,}235 & $(515,85,47)$ \\
$K_{19,43,49}$ & 3{,}855 & $(668,80,23)$ & $K_{21,23,33}$ & 1{,}935 & $(186,153,48)$ \\
$K_{21,23,43}$ & 2{,}375 & $(257,214,4)$ & $K_{21,25,35}$ & 2{,}135 & $(224,154,49)$ \\
$K_{21,25,45}$ & 2{,}595 & $(303,213,3)$ & $K_{21,27,31}$ & 2{,}055 & $(213,120,78)$ \\
$K_{21,27,41}$ & 2{,}535 & $(300,177,30)$ & $K_{21,31,37}$ & 2{,}575 & $(316,131,68)$ \\
$K_{21,31,47}$ & 3{,}095 & $(419,184,16)$ & $K_{21,33,43}$ & 3{,}015 & $(408,150,45)$ \\
$K_{21,35,45}$ & 3{,}255 & $(462,147,42)$ & $K_{21,37,41}$ & 3{,}155 & $(443,115,73)$ \\
$K_{21,37,51}$ & 3{,}735 & $(570,162,15)$ & $K_{21,41,47}$ & 3{,}775 & $(586,116,53)$ \\
$K_{21,43,53}$ & 4{,}295 & $(710,127,22)$ & $K_{23,25,35}$ & 2{,}255 & $(212,177,62)$ \\
$K_{23,25,45}$ & 2{,}735 & $(289,244,14)$ & $K_{23,29,39}$ & 2{,}695 & $(296,179,64)$ \\
$K_{23,29,49}$ & 3{,}215 & $(389,242,12)$ & $K_{23,31,33}$ & 2{,}495 & $(262,130,107)$ \\
$K_{23,31,43}$ & 3{,}035 & $(363,191,53)$ & $K_{23,33,41}$ & 3{,}055 & $(371,166,74)$ \\
$K_{23,33,51}$ & 3{,}615 & $(480,225,18)$ & $K_{23,35,45}$ & 3{,}415 & $(446,176,61)$ \\
$K_{23,35,55}$ & 3{,}995 & $(563,233,3)$ & $K_{23,39,49}$ & 3{,}935 & $(562,170,55)$ \\
$K_{23,41,43}$ & 3{,}695 & $(512,125,102)$ & $K_{23,41,53}$ & 4{,}335 & $(653,176,38)$ \\
$K_{23,43,51}$ & 4{,}355 & $(661,151,59)$ & $K_{25,27,35}$ & 2{,}495 & $(223,188,88)$ \\
$K_{25,27,45}$ & 3{,}015 & $(306,261,36)$ & $K_{25,29,35}$ & 2{,}615 & $(246,176,101)$ \\
$K_{25,29,45}$ & 3{,}155 & $(337,247,47)$ & $K_{25,31,35}$ & 2{,}735 & $(269,164,114)$ \\
$K_{25,31,45}$ & 3{,}295 & $(368,233,58)$ & $K_{25,31,55}$ & 3{,}855 & $(467,302,2)$ \\
$K_{25,33,35}$ & 2{,}855 & $(292,152,127)$ & $K_{25,33,45}$ & 3{,}435 & $(399,219,69)$ \\
$K_{25,33,55}$ & 4{,}015 & $(506,286,11)$ & $K_{25,35,37}$ & 3{,}095 & $(338,153,128)$ \\
$K_{25,35,39}$ & 3{,}215 & $(361,166,116)$ & $K_{25,35,41}$ & 3{,}335 & $(384,179,104)$ \\
$K_{25,35,43}$ & 3{,}455 & $(407,192,92)$ & $K_{25,35,45}$ & 3{,}575 & $(430,205,80)$ \\
$K_{25,35,47}$ & 3{,}695 & $(453,218,68)$ & $K_{25,35,49}$ & 3{,}815 & $(476,231,56)$ \\
$K_{25,35,51}$ & 3{,}935 & $(499,244,44)$ & $K_{25,35,53}$ & 4{,}055 & $(522,257,32)$ \\
$K_{25,35,55}$ & 4{,}175 & $(545,270,20)$ & $K_{25,35,57}$ & 4{,}295 & $(568,283,8)$ \\
$K_{25,37,45}$ & 3{,}715 & $(461,191,91)$ & $K_{25,37,55}$ & 4{,}335 & $(584,254,29)$ \\
$K_{25,39,45}$ & 3{,}855 & $(492,177,102)$ & $K_{25,41,45}$ & 3{,}995 & $(523,163,113)$ \\
$K_{25,43,45}$ & 4{,}135 & $(554,149,124)$ & $K_{27,29,37}$ & 2{,}855 & $(251,214,106)$ \\
$K_{27,29,47}$ & 3{,}415 & $(340,293,50)$ & $K_{27,31,41}$ & 3{,}215 & $(314,232,97)$ \\
$K_{27,31,51}$ & 3{,}795 & $(411,309,39)$ & $K_{27,35,45}$ & 3{,}735 & $(414,234,99)$ \\
$K_{27,35,55}$ & 4{,}355 & $(527,307,37)$ & $K_{27,37,49}$ & 4{,}135 & $(493,248,86)$ \\ $K_{29,33,39}$ & 3{,}375 & $(306,228,141)$ & $K_{29,33,49}$ & 3{,}995 & $(409,311,79)$ \\
$K_{29,35,45}$ & 3{,}895 & $(398,263,118)$ & $K_{29,37,47}$ & 4{,}175 & $(452,264,119)$ \\
$K_{29,39,43}$ & 4{,}055 & $(433,218,160)$ & $K_{31,33,43}$ & 3{,}775 & $(332,289,134)$ \\
$K_{31,35,45}$ & 4{,}055 & $(382,292,137)$ & $K_{31,37,41}$ & 3{,}935 & $(365,242,180)$ \\
$K_{33,35,45}$ & 4{,}215 & $(366,321,156)$ &  &  &  \\
\end{longtable}
\endgroup

\subsection{A human-readable matrix rendering}
\label{subsec:matrix-format}

For publication and manual inspection it is useful to render a cycle-list
certificate as three edge-label matrices.  For a decomposition with numbered
cycles $1,\ldots,N$, define
\[
M_{AB}\in\{1,\ldots,N\}^{r\times s},\quad
M_{AC}\in\{1,\ldots,N\}^{r\times t},\quad
M_{BC}\in\{1,\ldots,N\}^{s\times t},
\]
where $M_{XY}(i,j)=k$ means that the edge $X_iY_j$ belongs to cycle $k$.
There is one matrix cell for every graph edge.  Every cycle label occurs five
times across the three matrices, and its five cells are exactly the five
edges of that cycle.  If the singleton part of the cycle is $A$, $B$, or $C$,
then its multiplicities in $(M_{AB},M_{AC},M_{BC})$ are respectively
$(1,1,3)$, $(1,3,1)$, or $(3,1,1)$.

Appendix~\ref{app:k9matrix} gives the complete edge-label matrices for
$K_{9,19,23}$, and Appendix~\ref{app:k9cycles} gives the corresponding
canonical numbered cycle list.  This example illustrates both the compact
machine-readable certificate and a format suitable for human auditing.

\section{Remarks}

\begin{enumerate}[label=\arabic*.]
\item
The proof is entirely constructive. Given an odd boundary triple, the
decomposition can be written down directly using arithmetic in three cyclic
groups.

\item
The role of the boundary condition is transparent. Equation~(5) shows that
equality in the Mahmoodian--Mirzakhani inequality forces one of the three
possible cycle types to disappear completely. The identities
$3u+v=a$ and $u+3v=b$
express exactly how the remaining two cycle types distribute the $AC$- and
$BC$-difference classes.

\item
The parameterization
$(ga,gb,ab)$ where $a+b=4g$
shows that the pairwise-product structure visible in examples such as
$(15,21,35)$,
$(35,65,91)$, and
$(45,55,99)$
is not accidental but is forced by the arithmetic of primitive odd boundary
triples. The endpoint $b=3a$ reduces to the $K_{m,3m,3m}$ family already
constructed by Mahmoodian and Mirzakhani.

\item
The boundary construction leaves open the general strict-interior problem
\[
t<\frac{4rs}{r+s}.
\]
Section~\ref{sec:computation} settles 117 individual cases by explicit
certificate.  Together with the boundary theorem and the earlier literature,
this leaves, to the best of our knowledge, no unresolved admissible case with
fewer than $4400$ edges.  This finite-range completion does not replace a
general construction.  In the strict
interior all three cycle types are generally present, so the two complementary
difference blocks used on the boundary must be replaced by a genuinely three-way
structure.
\end{enumerate}

\section*{Data availability}

The paper, all machine-readable decomposition certificates, and the independent verifier
\texttt{verify\_\allowbreak c5\_\allowbreak tripartite.py} have
been archived together in a versioned Zenodo record.  The DOI of the archived
version corresponding to this manuscript is \ZenodoDOI. The Zenodo record is the archival source
for the certificates used in Proposition~\ref{prop:computational}.

\section*{Acknowledgment}
Sharareh Alipour kindly reviewed an earlier draft of this paper.

\section*{Disclosure}

GPT-5.6 Sol (OpenAI) and Claude Opus 5 (Anthropic) were used in the development of this work to assist with proof development and verification, computer code and computations, and drafting and restructuring portions of the manuscript. The author reviewed and independently verified the resulting mathematical arguments, computations, and text and takes full responsibility for the contents of the paper.

\appendix

\begin{landscape}
\section{Edge-label matrix certificate for $K_{9,19,23}$}
\label{app:k9matrix}

For this graph, $|E|=815$, so a decomposition contains $163$ cycles.  We
canonicalize each cycle up to rotation and reversal, sort the canonical cycles
lexicographically, and label them $001,\ldots,163$.  The resulting singleton
counts are
\[
(x_A,x_B,x_C)=(137,22,4).
\]
The three matrices below contain exactly one entry for every cross-part edge.
For example, label $001$ corresponds to the cycle
\[
A_0B_0A_1B_1C_0A_0,
\]
and appears in the five matrix cells representing its five edges.
\\[1em]
\begingroup
\footnotesize
\setlength{\tabcolsep}{2.15pt}
\renewcommand{\arraystretch}{1.03}
\noindent\textbf{$M_{AB}$}\par\vspace{2mm}
\begin{center}
\begin{tabular}{r|rrrrrrrrrrrrrrrrrrr}
 & \texttt{B0} & \texttt{B1} & \texttt{B2} & \texttt{B3} & \texttt{B4} & \texttt{B5} & \texttt{B6} & \texttt{B7} & \texttt{B8} & \texttt{B9} & \texttt{B10} & \texttt{B11} & \texttt{B12} & \texttt{B13} & \texttt{B14} & \texttt{B15} & \texttt{B16} & \texttt{B17} & \texttt{B18} \\ \hline
\texttt{A0} & 001 & 002 & 003 & 004 & 005 & 006 & 007 & 008 & 009 & 010 & 011 & 012 & 013 & 014 & 015 & 016 & 017 & 018 & 019 \\
\texttt{A1} & 001 & 001 & 022 & 023 & 024 & 025 & 026 & 027 & 028 & 029 & 030 & 031 & 032 & 033 & 034 & 035 & 036 & 028 & 037 \\
\texttt{A2} & 042 & 043 & 044 & 045 & 046 & 047 & 048 & 049 & 050 & 051 & 050 & 052 & 053 & 054 & 055 & 056 & 057 & 044 & 058 \\
\texttt{A3} & 062 & 063 & 044 & 064 & 065 & 066 & 067 & 068 & 038 & 069 & 070 & 071 & 072 & 073 & 074 & 075 & 076 & 077 & 060 \\
\texttt{A4} & 079 & 080 & 081 & 082 & 083 & 084 & 085 & 021 & 061 & 086 & 087 & 088 & 020 & 089 & 090 & 091 & 092 & 093 & 094 \\
\texttt{A5} & 097 & 098 & 099 & 100 & 101 & 102 & 103 & 104 & 105 & 106 & 107 & 108 & 095 & 109 & 110 & 111 & 112 & 113 & 114 \\
\texttt{A6} & 115 & 116 & 117 & 118 & 041 & 119 & 059 & 120 & 121 & 122 & 050 & 123 & 124 & 125 & 078 & 126 & 127 & 028 & 128 \\
\texttt{A7} & 130 & 131 & 132 & 133 & 134 & 135 & 136 & 096 & 137 & 138 & 129 & 139 & 140 & 141 & 142 & 039 & 143 & 040 & 144 \\
\texttt{A8} & 145 & 146 & 147 & 148 & 149 & 150 & 151 & 152 & 153 & 154 & 155 & 156 & 157 & 158 & 159 & 160 & 161 & 162 & 163 \\
\end{tabular}
\end{center}
\vspace{3mm}
\noindent\textbf{$M_{AC}$}\par\vspace{2mm}
\begin{center}
\begin{tabular}{r|rrrrrrrrrrrrrrrrrrrrrrr}
 & \texttt{C0} & \texttt{C1} & \texttt{C2} & \texttt{C3} & \texttt{C4} & \texttt{C5} & \texttt{C6} & \texttt{C7} & \texttt{C8} & \texttt{C9} & \texttt{C10} & \texttt{C11} & \texttt{C12} & \texttt{C13} & \texttt{C14} & \texttt{C15} & \texttt{C16} & \texttt{C17} & \texttt{C18} & \texttt{C19} & \texttt{C20} & \texttt{C21} & \texttt{C22} \\ \hline
\texttt{A0} & 001 & 010 & 020 & 016 & 004 & 006 & 005 & 009 & 019 & 012 & 017 & 007 & 014 & 013 & 008 & 003 & 015 & 021 & 021 & 018 & 011 & 002 & 020 \\
\texttt{A1} & 023 & 038 & 034 & 039 & 033 & 024 & 031 & 027 & 037 & 022 & 035 & 040 & 026 & 030 & 025 & 041 & 039 & 040 & 038 & 032 & 029 & 041 & 036 \\
\texttt{A2} & 043 & 037 & 057 & 055 & 059 & 042 & 052 & 054 & 037 & 056 & 060 & 047 & 059 & 058 & 051 & 053 & 046 & 060 & 049 & 061 & 045 & 048 & 061 \\
\texttt{A3} & 063 & 074 & 078 & 066 & 069 & 065 & 062 & 068 & 044 & 077 & 060 & 070 & 067 & 078 & 071 & 053 & 073 & 075 & 038 & 053 & 064 & 072 & 076 \\
\texttt{A4} & 080 & 094 & 020 & 093 & 089 & 090 & 091 & 088 & 083 & 079 & 081 & 095 & 085 & 092 & 087 & 096 & 084 & 021 & 082 & 095 & 086 & 096 & 061 \\
\texttt{A5} & 098 & 104 & 111 & 097 & 103 & 101 & 107 & 112 & 110 & 105 & 113 & 076 & 102 & 058 & 108 & 100 & 099 & 114 & 058 & 095 & 109 & 106 & 076 \\
\texttt{A6} & 116 & 121 & 078 & 113 & 059 & 127 & 123 & 120 & 050 & 126 & 113 & 028 & 122 & 129 & 125 & 115 & 128 & 129 & 117 & 124 & 118 & 041 & 119 \\
\texttt{A7} & 131 & 121 & 137 & 105 & 130 & 134 & 141 & 138 & 133 & 105 & 132 & 040 & 135 & 142 & 144 & 140 & 039 & 129 & 121 & 136 & 139 & 096 & 143 \\
\texttt{A8} & 146 & 136 & 145 & 153 & 149 & 148 & 155 & 157 & 154 & 162 & 007 & 007 & 161 & 147 & 151 & 159 & 156 & 150 & 160 & 136 & 163 & 158 & 152 \\
\end{tabular}
\end{center}
\endgroup
\newpage
\begingroup
\footnotesize
\setlength{\tabcolsep}{2.15pt}
\renewcommand{\arraystretch}{1.03}
\noindent\textbf{$M_{BC}$}\par\vspace{2mm}
\begin{center}
\begin{tabular}{r|rrrrrrrrrrrrrrrrrrrrrrr}
 & \texttt{C0} & \texttt{C1} & \texttt{C2} & \texttt{C3} & \texttt{C4} & \texttt{C5} & \texttt{C6} & \texttt{C7} & \texttt{C8} & \texttt{C9} & \texttt{C10} & \texttt{C11} & \texttt{C12} & \texttt{C13} & \texttt{C14} & \texttt{C15} & \texttt{C16} & \texttt{C17} & \texttt{C18} & \texttt{C19} & \texttt{C20} & \texttt{C21} & \texttt{C22} \\ \hline
\texttt{B0} & 146 & 002 & 100 & 004 & 004 & 148 & 148 & 146 & 133 & 064 & 079 & 133 & 062 & 097 & 042 & 100 & 082 & 130 & 082 & 115 & 064 & 002 & 145 \\
\texttt{B1} & 001 & 002 & 145 & 097 & 130 & 042 & 062 & 146 & 063 & 079 & 079 & 043 & 062 & 097 & 042 & 115 & 098 & 130 & 116 & 115 & 080 & 131 & 145 \\
\texttt{B2} & 080 & 022 & 075 & 055 & 161 & 081 & 005 & 140 & 132 & 126 & 126 & 070 & 161 & 117 & 005 & 140 & 055 & 075 & 070 & 147 & 080 & 003 & 099 \\
\texttt{B3} & 116 & 010 & 100 & 004 & 045 & 118 & 148 & 138 & 154 & 064 & 138 & 133 & 122 & 122 & 051 & 154 & 082 & 051 & 116 & 010 & 023 & 106 & 106 \\
\texttt{B4} & 043 & 104 & 083 & 093 & 033 & 081 & 085 & 134 & 024 & 046 & 081 & 043 & 085 & 104 & 005 & 041 & 065 & 049 & 049 & 093 & 149 & 033 & 101 \\
\texttt{B5} & 098 & 052 & 066 & 153 & 149 & 025 & 052 & 088 & 047 & 012 & 150 & 012 & 153 & 119 & 102 & 135 & 098 & 084 & 048 & 006 & 149 & 048 & 088 \\
\texttt{B6} & 131 & 136 & 066 & 066 & 157 & 006 & 085 & 157 & 047 & 026 & 007 & 047 & 059 & 119 & 067 & 151 & 084 & 084 & 048 & 006 & 103 & 131 & 119 \\
\texttt{B7} & 063 & 068 & 057 & 124 & 103 & 152 & 008 & 009 & 063 & 046 & 057 & 027 & 067 & 104 & 067 & 096 & 046 & 049 & 021 & 124 & 103 & 009 & 120 \\
\texttt{B8} & 023 & 038 & 142 & 105 & 159 & 065 & 008 & 112 & 050 & 077 & 077 & 028 & 153 & 142 & 008 & 159 & 065 & 137 & 121 & 061 & 023 & 009 & 112 \\
\texttt{B9} & 029 & 125 & 054 & 014 & 086 & 118 & 141 & 054 & 069 & 158 & 138 & 073 & 014 & 122 & 125 & 154 & 073 & 051 & 141 & 010 & 118 & 158 & 106 \\
\texttt{B10} & 011 & 074 & 137 & 128 & 086 & 032 & 087 & 120 & 132 & 074 & 132 & 030 & 155 & 129 & 108 & 107 & 128 & 137 & 070 & 032 & 086 & 108 & 120 \\
\texttt{B11} & 139 & 052 & 017 & 016 & 045 & 025 & 071 & 156 & 019 & 031 & 017 & 012 & 160 & 013 & 025 & 123 & 016 & 013 & 160 & 019 & 045 & 108 & 088 \\
\texttt{B12} & 139 & 094 & 083 & 124 & 157 & 032 & 087 & 140 & 083 & 026 & 094 & 095 & 026 & 092 & 087 & 072 & 015 & 013 & 092 & 053 & 139 & 015 & 020 \\
\texttt{B13} & 109 & 125 & 054 & 014 & 069 & 152 & 155 & 156 & 069 & 158 & 150 & 073 & 155 & 147 & 151 & 151 & 156 & 150 & 141 & 147 & 089 & 033 & 152 \\
\texttt{B14} & 011 & 110 & 142 & 036 & 159 & 024 & 091 & 027 & 024 & 074 & 035 & 027 & 135 & 078 & 091 & 135 & 055 & 035 & 034 & 090 & 011 & 015 & 036 \\
\texttt{B15} & 029 & 068 & 075 & 039 & 162 & 101 & 123 & 068 & 111 & 162 & 126 & 056 & 160 & 117 & 091 & 123 & 016 & 035 & 117 & 018 & 029 & 018 & 101 \\
\texttt{B16} & 163 & 110 & 017 & 036 & 161 & 090 & 031 & 127 & 110 & 031 & 057 & 076 & 102 & 114 & 102 & 072 & 143 & 114 & 092 & 090 & 163 & 072 & 112 \\
\texttt{B17} & 109 & 022 & 034 & 113 & 162 & 127 & 107 & 127 & 044 & 022 & 077 & 030 & 144 & 030 & 144 & 107 & 099 & 040 & 034 & 093 & 109 & 018 & 099 \\
\texttt{B18} & 163 & 037 & 111 & 128 & 089 & 134 & 071 & 134 & 111 & 056 & 094 & 056 & 144 & 114 & 071 & 003 & 143 & 060 & 058 & 019 & 089 & 003 & 143 \\
\end{tabular}
\end{center}
\vspace{2mm}

\noindent\textit{Matrix verification rule.} Every matrix cell represents one
edge.  Each label occurs exactly five times, and the five cells carrying that
label are the edges of the corresponding cycle in
Appendix~\ref{app:k9cycles}.  Hence the matrices give a complete human-readable
rendering of the cycle-list certificate.
\endgroup
\end{landscape}

\section{Canonical cycle-list certificate for $K_{9,19,23}$}
\label{app:k9cycles}

The complete numbered certificate corresponding to the matrices in
Appendix~\ref{app:k9matrix} is listed below.  Each line gives one 5-cycle; the
closing edge from the fifth vertex back to the first is implicit.

\begingroup
\scriptsize\ttfamily
\setlength{\columnsep}{1.2em}
\begin{multicols}{3}
\noindent
001 A0 B0 A1 B1 C0\\
002 A0 B1 C1 B0 C21\\
003 A0 B2 C21 B18 C15\\
004 A0 B3 C3 B0 C4\\
005 A0 B4 C14 B2 C6\\
006 A0 B5 C19 B6 C5\\
007 A0 B6 C10 A8 C11\\
008 A0 B7 C6 B8 C14\\
009 A0 B8 C21 B7 C7\\
010 A0 B9 C19 B3 C1\\
011 A0 B10 C0 B14 C20\\
012 A0 B11 C11 B5 C9\\
013 A0 B12 C17 B11 C13\\
014 A0 B13 C3 B9 C12\\
015 A0 B14 C21 B12 C16\\
016 A0 B15 C16 B11 C3\\
017 A0 B16 C2 B11 C10\\
018 A0 B17 C21 B15 C19\\
019 A0 B18 C19 B11 C8\\
020 A0 C2 A4 B12 C22\\
021 A0 C17 A4 B7 C18\\
022 A1 B2 C1 B17 C9\\
023 A1 B3 C20 B8 C0\\
024 A1 B4 C8 B14 C5\\
025 A1 B5 C5 B11 C14\\
026 A1 B6 C9 B12 C12\\
027 A1 B7 C11 B14 C7\\
028 A1 B8 C11 A6 B17\\
029 A1 B9 C0 B15 C20\\
030 A1 B10 C11 B17 C13\\
031 A1 B11 C9 B16 C6\\
032 A1 B12 C5 B10 C19\\
033 A1 B13 C21 B4 C4\\
034 A1 B14 C18 B17 C2\\
035 A1 B15 C17 B14 C10\\
036 A1 B16 C3 B14 C22\\
037 A1 B18 C1 A2 C8\\
038 A1 C1 B8 A3 C18\\
039 A1 C3 B15 A7 C16\\
040 A1 C11 A7 B17 C17\\
041 A1 C15 B4 A6 C21\\
042 A2 B0 C14 B1 C5\\
043 A2 B1 C11 B4 C0\\
044 A2 B2 A3 C8 B17\\
045 A2 B3 C4 B11 C20\\
046 A2 B4 C9 B7 C16\\
047 A2 B5 C8 B6 C11\\
048 A2 B6 C18 B5 C21\\
049 A2 B7 C17 B4 C18\\
050 A2 B8 C8 A6 B10\\
051 A2 B9 C17 B3 C14\\
052 A2 B11 C1 B5 C6\\
053 A2 B12 C19 A3 C15\\
054 A2 B13 C2 B9 C7\\
055 A2 B14 C16 B2 C3\\
056 A2 B15 C11 B18 C9\\
057 A2 B16 C10 B7 C2\\
058 A2 B18 C18 A5 C13\\
059 A2 C4 A6 B6 C12\\
060 A2 C10 A3 B18 C17\\
061 A2 C19 B8 A4 C22\\
062 A3 B0 C12 B1 C6\\
063 A3 B1 C8 B7 C0\\
064 A3 B3 C9 B0 C20\\
065 A3 B4 C16 B8 C5\\
066 A3 B5 C2 B6 C3\\
067 A3 B6 C14 B7 C12\\
068 A3 B7 C1 B15 C7\\
069 A3 B9 C8 B13 C4\\
070 A3 B10 C18 B2 C11\\
071 A3 B11 C6 B18 C14\\
072 A3 B12 C15 B16 C21\\
073 A3 B13 C11 B9 C16\\
074 A3 B14 C9 B10 C1\\
075 A3 B15 C2 B2 C17\\
076 A3 B16 C11 A5 C22\\
077 A3 B17 C10 B8 C9\\
078 A3 C2 A6 B14 C13\\
079 A4 B0 C10 B1 C9\\
080 A4 B1 C20 B2 C0\\
081 A4 B2 C5 B4 C10\\
082 A4 B3 C16 B0 C18\\
083 A4 B4 C2 B12 C8\\
084 A4 B5 C17 B6 C16\\
085 A4 B6 C6 B4 C12\\
086 A4 B9 C4 B10 C20\\
087 A4 B10 C6 B12 C14\\
088 A4 B11 C22 B5 C7\\
089 A4 B13 C20 B18 C4\\
090 A4 B14 C19 B16 C5\\
091 A4 B15 C14 B14 C6\\
092 A4 B16 C18 B12 C13\\
093 A4 B17 C19 B4 C3\\
094 A4 B18 C10 B12 C1\\
095 A4 C11 B12 A5 C19\\
096 A4 C15 B7 A7 C21\\
097 A5 B0 C13 B1 C3\\
098 A5 B1 C16 B5 C0\\
099 A5 B2 C22 B17 C16\\
100 A5 B3 C2 B0 C15\\
101 A5 B4 C22 B15 C5\\
102 A5 B5 C14 B16 C12\\
103 A5 B6 C20 B7 C4\\
104 A5 B7 C13 B4 C1\\
105 A5 B8 C3 A7 C9\\
106 A5 B9 C22 B3 C21\\
107 A5 B10 C15 B17 C6\\
108 A5 B11 C21 B10 C14\\
109 A5 B13 C0 B17 C20\\
110 A5 B14 C1 B16 C8\\
111 A5 B15 C8 B18 C2\\
112 A5 B16 C22 B8 C7\\
113 A5 B17 C3 A6 C10\\
114 A5 B18 C13 B16 C17\\
115 A6 B0 C19 B1 C15\\
116 A6 B1 C18 B3 C0\\
117 A6 B2 C13 B15 C18\\
118 A6 B3 C5 B9 C20\\
119 A6 B5 C13 B6 C22\\
120 A6 B7 C22 B10 C7\\
121 A6 B8 C18 A7 C1\\
122 A6 B9 C13 B3 C12\\
123 A6 B11 C15 B15 C6\\
124 A6 B12 C3 B7 C19\\
125 A6 B13 C1 B9 C14\\
126 A6 B15 C10 B2 C9\\
127 A6 B16 C7 B17 C5\\
128 A6 B18 C3 B10 C16\\
129 A6 C13 B10 A7 C17\\
130 A7 B0 C17 B1 C4\\
131 A7 B1 C21 B6 C0\\
132 A7 B2 C8 B10 C10\\
133 A7 B3 C11 B0 C8\\
134 A7 B4 C7 B18 C5\\
135 A7 B5 C15 B14 C12\\
136 A7 B6 C1 A8 C19\\
137 A7 B8 C17 B10 C2\\
138 A7 B9 C10 B3 C7\\
139 A7 B11 C0 B12 C20\\
140 A7 B12 C7 B2 C15\\
141 A7 B13 C18 B9 C6\\
142 A7 B14 C2 B8 C13\\
143 A7 B16 C16 B18 C22\\
144 A7 B18 C12 B17 C14\\
145 A8 B0 C22 B1 C2\\
146 A8 B1 C7 B0 C0\\
147 A8 B2 C19 B13 C13\\
148 A8 B3 C6 B0 C5\\
149 A8 B4 C20 B5 C4\\
150 A8 B5 C10 B13 C17\\
151 A8 B6 C15 B13 C14\\
152 A8 B7 C5 B13 C22\\
153 A8 B8 C12 B5 C3\\
154 A8 B9 C15 B3 C8\\
155 A8 B10 C12 B13 C6\\
156 A8 B11 C7 B13 C16\\
157 A8 B12 C4 B6 C7\\
158 A8 B13 C9 B9 C21\\
159 A8 B14 C4 B8 C15\\
160 A8 B15 C12 B11 C18\\
161 A8 B16 C4 B2 C12\\
162 A8 B17 C4 B15 C9\\
163 A8 B18 C0 B16 C20\\
\end{multicols}
\endgroup

\section{Independent Python verifier}
\label{app:verifier}

The following is the independent verifier used to check the cycle-list
certificates.  It has no dependency on the exact-cover solver.

\begingroup
\small
\begin{verbatim}
#!/usr/bin/env python3
import re
import sys
from pathlib import Path

filename = sys.argv[1]

r, s, t = map(int, re.match(
    r'^k(\d+)_(\d+)_(\d+)',
    Path(filename).name,
    re.I
).groups())

sizes = {"A": r, "B": s, "C": t}
E = r*s + r*t + s*t

edges = set()
cycles = 0

for line in open(filename):
    if not line.strip():
        continue

    c = line.split()
    assert len(c) == 5
    assert len(set(c)) == 5

    for v in c:
        assert v[0] in sizes
        assert 0 <= int(v[1:]) < sizes[v[0]]

    for i in range(5):
        u, v = c[i], c[(i + 1) % 5]
        assert u[0] != v[0]
        edges.add(tuple(sorted((u, v))))

    cycles += 1

assert cycles * 5 == E
assert len(edges) == E

print(f"VERIFIED: K_{{{r},{s},{t}}}, {cycles} cycles, {E} edges")
\end{verbatim}
\endgroup

\end{document}